\newcommand{\lyxaddress}[1]{
\par {\raggedright #1
\vspace{1.4em}
\noindent\par}
}
\theoremstyle{plain}
\newtheorem{thm}{\protect\theoremname}
  \theoremstyle{plain}
  \newtheorem{lem}[thm]{\protect\lemmaname}
  \providecommand{\lemmaname}{Lemma}
\providecommand{\theoremname}{Theorem}
\begin{document}

\title{Orthocenters of triangles in the n-dimensional space}

\author{Wilson Pacheco (wpachecoredondo@gmail.com)\\
John Vargas (varjohn@gmail.com)\\
}

\maketitle

\lyxaddress{\begin{center}
Departamento de Matematicas\\
Facultad Experimental de Ciencias\\
Universidad del Zulia\\
Maracaibo - Venezuela
\par\end{center}}
\begin{abstract}
In this paper we present a way to define a set of orthocenters for
a triangle in the n-dimensional space $R^{n}$ and we will see some
analogies of these orthocenters with the classic orthocenter of a
triangle in the Euclidean plane. 
\end{abstract}

\section{Introduction}

In the Euclidean plane, the orthocenter $H$ of a triangle $\triangle ABC$
is defined as the point where the altitudes of the triangle converge,
i.e., the point at which lines perpendicular to the sides of the triangle
passing through the opposite vertex to such sides converge. If $O$
and $G$ are the circumcenter and the centroid of the triangle respectively,
the classical Euler's theorem asserts that $O$ , $G$ and $H$ are
collinear and $OG=2GH$.

Another property of the orthocenter of a triangle is the following:
the orthocenter is where concur the circles of radius equal to the
circumscribed passing through two vertices of the triangle, i.e if
the circumcircle is reflected with respect to the midpoints of the
sides of the triangle, then the three circles obtained concur in the
orthocenter of the triangle. Since this definition of orthocenter
not have to initially do with the notion of orthogonality we speak
in this case of $\mathcal{C}$-orthocenter. Moreover, by the definition
of $\mathcal{C}$-orthocenter of a triangle, it is the circumcenter
of the triangle whose vertices are the symmetric of the circumcenter
with respect to the midpoint of the sides.

If the triangle $\triangle ABC$ is not a right triangle, then the
triangles $\triangle HBC$ , $\triangle AHC$ and $\triangle ABH$
have the points A, B, C as orthocenters, respectively, i.e., triangles
with three vertices in the set$\left\{ A,B,C,H\right\} $ has as orthocenter
the remaining point. A set of four points satisfying the above property
is called an orthocentric system. Basic references to the orthocentric
system in Minkowski planes are in  \cite{M}.

When we review the properties related to the orthocenter such as;
Euler line, Feuerbach circumference, $\mathcal{C}$-orthocenter, orthocentric
system, we realize that their validity essentially depend on the relationship
between vertices and the circumcenter of the triangle, i.e, equidistance.
In this paper we will use this idea to define an 'orthocenter' associated
with each point that is equidistant from the vertices of a triangle
in the n-dimensional space and we will see some properties similar
to those of the orthocenter in the Euclidean plane.

\section{Notation and Preliminaries}

$R^{n}$ denote the classical $n$-dimensional Euclidean space, its
elements as vector space or affine space will call points and denoted
them with capital letters. if $A$ and $B$ are two points, then $\overrightarrow{AB}$
and $AB$ denote the vector and the standard segment with ends $A$
and $B$ respectively, i.e, $\overrightarrow{AB}=B-A\;\textrm{and }AB=\left\Vert B-A\right\Vert $.

A triangle $\triangle A_{0}A_{1}A_{2}$ is determined by three non
collinear points $A_{0}$ , $A_{1}$ and $A_{2}$ in the space $R^{n}$,
the points $A_{i}$ are called vertices of the triangle, the segment
denoted by $a_{i}$ whose endpoints are the vertices other than $A_{i}$
is called side of the triangle and is said to $A_{i}$ is his opposite
vertex. Denote by $O$, $\mathcal{C}$, $r$ and $G$ the circumcenter,
the circumcircle, the circumradius and the centroid of the triangle
$\triangle A_{0}A_{1}A_{2}$ , respectively, i.e, $O$ it is the only
point on plane determined by $A_{0}$ ,$A_{1}$, $A_{2}$ equidistant
from them, $\mathcal{C}$ is the circumference on afore mentioned
plane passing through $A_{0}$ ,$A_{1}$, $A_{2}$ , $r=OA_{0}=OA_{1}=OA_{2}$
and $G=\frac{A_{0}+A_{1}+A_{2}}{3}$, and $M_{i}$ the centroid (midpoint)
of side $a_{i}$. We also recall the medial or Feuerbach triangle
$\triangle M_{0}M_{1}M_{2}$ of the triangle $\triangle A_{0}A_{1}A_{2}$
, and denote its circumcenter by $Q_{O}$. Note that $Q_{O}=\frac{1}{2}\left(A_{0}+A_{1}+A_{2}-O\right)$.

If $P$ is a point of $R^{n}$ and $\lambda$ is a scalar, the homothetic
with center $P$ and ratio $\lambda$, is the application $\mathfrak{\mathscr{H}}_{P,\lambda}:R^{n}\rightarrow R^{n}$
defined by 
\[
\mathfrak{\mathscr{H}}_{P,\lambda}\left(X\right)=\left(1-\lambda\right)P+\lambda X,
\]
for all $X$ in $R^{n}$. $\mathsf{\mathscr{H}}_{P,-1}$ we will symbolize
by $\mathscr{\mathfrak{\mathit{\mathscr{S}}}_{P}}$ which is called
the point reflection with respect to $P$.

The following list contains some of the properties satisfied by the
orthocenter (see Figure 1). 

For the triangle $\triangle A_{0}A_{1}A_{2}$, the orthocenter $H$
is expressed as a function of the circumcenter $O$ and the vertices
of the triangle by the formula $H=A_{0}+A_{1}+A_{2}-2O$ and it is
not difficult to see that $H$ is the circumcenter of the triangle
$\triangle B_{0}{1}B_{-}B_{2}$, where $B_{i}$ is the symmetric of
$O$ with respect to $M_{i}$ for $i=0,1,2$, i.e, $B_{i}=A_{j}+A_{k}-O$.
The points $B_{0}$, $B_{1}$ and $B_{2}$ are the circumcenters of
the triangles $\triangle HA_{1}A_{2}$, $\triangle A0_{H}A_{2}$ and
$\triangle A_{0}A_{1}H$ respectively, the circumscribed circles of
these triangles are denoted by $\mathcal{C}_{0}$, $\mathcal{C}_{1}$
and $\mathcal{C}_{2}$, and all of them have radius $r$. The triangles
$\triangle A_{0}A_{1}A_{2}$ and $\triangle B_{0}B_{1}B_{2}$ are
symmetrical and in \cite{P-R} the triangle $\triangle B_{0}B_{1}B_{2}$
is called the antitriangle of the triangle $\triangle A_{0}A_{1}A_{2}$
associated with $O$. The center of symmetry between both triangles
is the point $Q_{O}$. 

\begin{center}
\begin{figure}[H]
\includegraphics{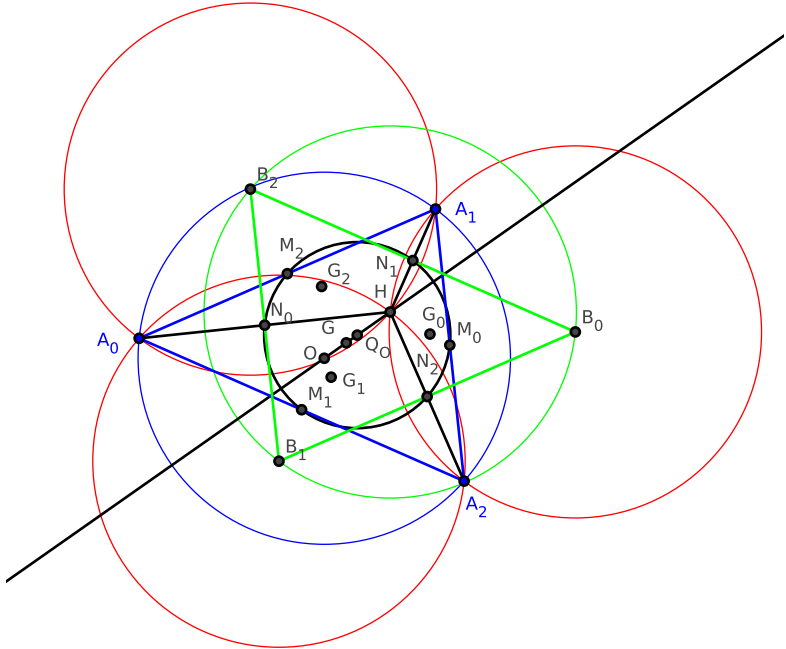}\caption{Orthocenter properties}
\end{figure}

\par\end{center}
\begin{enumerate}
\item The points $O$, $G$ and $H$ are collinear, with G in between, and
$2OG=GH$ ( Euler property).
\item If $N_{0}$, $N_{1}$ and $N_{2}$ are the midpoints of the sides
of the triangle $\triangle B_{0}B_{1}B_{2}$, the circumference of
center $Q_{O}$ and radio $\nicefrac{r}{2}$ (Feuerbach circumference)
passes through the points $M_{0}$, $M_{1}$,$M_{2}$ $N_{0}$, $N_{1}$
and $N_{2}$. It also passes through the midpoints of the segments
that joint $H$ with the points of the circumcircle of $\triangle A_{0}A_{1}A_{2}$.
and the midpoints of the segments that joint $O$ with the points
of the circumcircle of $\triangle B_{0}B_{1}B_{2}.$ 
\item the points $O$, $Q_{O}$, $G$, and $H$, form a harmonic range,
being satisfied $\frac{OG}{GQ_{o}}=\frac{OH}{HQ_{O}}=2$. 
\item The following sets are orthocentrics systems $\left\{ A_{0},A_{1},A_{2},H\right\} $,
$\left\{ B_{0},B_{1},B_{2},O\right\} $, $\left\{ M_{0},M_{1},M_{2},O\right\} $,
$\left\{ N_{0},N_{1},N_{2},H\right\} $ and $\left\{ G_{0},G_{1},G_{2},G\right\} $,
where $G_{0}$, $G_{1}$ and $G_{2}$ are the centroids of the triangles
$\triangle HA_{1}A_{2}$, $\triangle A_{0}HA_{2}$ y $\triangle A_{0}A_{1}H$
respectively.
\item If $\left\{ A_{0},A_{1},A_{2},A_{3}\right\} $ is an orthocentric
system, then $\overrightarrow{A_{i}A_{j}}\bot\overrightarrow{A_{k}A_{l}}$
for $\left\{ i,j,k,l\right\} =\left\{ 0,1,2,3\right\} $.
\end{enumerate}

\section{Results}

Given three non collinear points $A_{0}$, $A_{1}$, $A_{2}$ in the
Euclidean space, there is only one point that is equidistant from
them, which is precisely the circumcenter of the triangle $\triangle A_{0}A_{1}A_{2}$.
However, if the points $A_{0}$, $A_{1}$, $A_{2}$ are in an $n$-dimensional
space, with , then the set of equidistant points from $A_{0}$, $A_{1}$
and $A_{2}$ form an $(n-2)$-dimensional affine subspace, which we
denote by $\mathcal{\mathscr{\mathcal{\mathscr{C}}}}\left(\triangle A_{0}A_{1}A_{2}\right)$.
Each of these points is the center of an $n$-dimensional sphere passing
through the points $A_{0}$, $A_{1}$ and $A_{2}$. The following
theorem allows us to introduce the notion of an\"{ }orthocenter\textquoteright{}\textquoteright{}
associated with each point in $\mathcal{\mathscr{\mathcal{\mathscr{C}}}}\left(\triangle A_{0}A_{1}A_{2}\right)$,
and provides a generalization of the notion of C-orthocenter in the
plane. 
\begin{thm}
Let $\triangle A_{0}A_{1}A_{2}$ be a triangle in $R^{n}$ , $G$
its centroid and $H$ its orthocenter. If $P\in\mathcal{\mathcal{\mathscr{C}}}\left(\triangle A_{0}A_{1}A_{2}\right)$
and $\mathcal{S}$ is the sphere of center $P$ passing through the
points $A_{0}$, $A_{1}$ and A$_{2}$, and \textup{$r$ his} radius,
then the spheres $\mathcal{S}_{0}$, $\mathcal{S}_{1}$ and $\mathcal{S}_{2}$
that are symmetrical to $\mathcal{S}$ with respect to the midpoints
$M_{0}$, $M_{1}$ and $M_{2}$ of the sides of the triangle $\triangle A_{0}A_{1}A_{2}$
concur in the points $H$ and $H_{P}=A_{0}+A_{1}+A_{2}-2P$. Furthermore,
the following assertions hold: 
\begin{enumerate}
\item If $B_{0}$, $B_{1}$ and $B_{2}$ are the centers of $\mathcal{S}_{0}$,
$\mathcal{S}_{1}$ and $\mathcal{S}_{2}$ respectively, then the triangles
$\triangle A_{0}A_{1}A_{2}$ and $\triangle B_{0}B_{1}B_{2}$ are
symmetrical and the center of symmetry is the point $Q_{P}=\frac{1}{2}\left(A_{0}+A_{1}+A_{2}-P\right)$.
\item The points $P$, $G$ and $H_{P}$ are  collinear with $G$ between$P$
and $H_{P}$, and with $2PG=GH_{P}$. ( Euler property)
\item If $N_{0}$, $N_{1}$ and $N_{2}$ are the midpoints of the sides
of the triangle $\triangle B_{0}B_{1}B_{2}$, the sphere $\mathcal{S}_{M}$
of Center $Q_{P}$ and radio $\nicefrac{r}{2}$ passes through the
points $M_{0}$, $M_{1}$,$M_{2}$ $N_{0}$, $N_{1}$ and $N_{2}$.
It also passes through the midpoints of the segments that joint $H_{P}$
with the points of $\mathcal{S}$, and the midpoints of the segments
that joint $O$ with the points of the sphere $\mathcal{S}_{H}$ of
center $H_{P}$ and radius $r$ (Feuerbach sphere). 
\item The points $P$, $Q_{P}$, $G$, and $H_{P}$, form a harmonic range,
being satisfied $\frac{PG}{GQ_{P}}=\frac{PH_{P}}{H_{P}Q_{P}}=2$. 
\end{enumerate}
\end{thm}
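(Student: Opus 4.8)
The whole statement reduces to a handful of vector identities in $R^{n}$, the only genuinely geometric ingredient being the position of $P$ relative to the plane of the triangle. Throughout write $\{i,j,k\}=\{0,1,2\}$ and $S=A_{0}+A_{1}+A_{2}$. Since reflection in a point is an isometric involution, $\mathcal{S}_{i}=\mathscr{S}_{M_{i}}(\mathcal{S})$ is the sphere of radius $r$ centered at $B_{i}=2M_{i}-P=A_{j}+A_{k}-P$. Assertion 1 is then immediate: $\mathscr{S}_{Q_{P}}(A_{i})=2Q_{P}-A_{i}=S-P-A_{i}=A_{j}+A_{k}-P=B_{i}$, so the point reflection in $Q_{P}$ carries $\triangle A_{0}A_{1}A_{2}$ onto $\triangle B_{0}B_{1}B_{2}$ and fixes $Q_{P}$, which is therefore the center of symmetry. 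The concurrence at $H_{P}$ is just as quick: $H_{P}-B_{i}=(S-2P)-(A_{j}+A_{k}-P)=A_{i}-P$, so $\|H_{P}-B_{i}\|=\|A_{i}-P\|=r$ and $H_{P}\in\mathcal{S}_{i}$ for every $i$.

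The only step requiring an argument is that the classical orthocenter $H=S-2O$ of the planar triangle also lies on each $\mathcal{S}_{i}$. Here I would use that $O$ and $P$ are both equidistant from $A_{0},A_{1},A_{2}$; subtracting the equalities $\|X-A_{0}\|^{2}=\|X-A_{1}\|^{2}$ and $\|X-A_{0}\|^{2}=\|X-A_{2}\|^{2}$ for $X\in\{O,P\}$ gives $\langle P-O,\,A_{j}-A_{i}\rangle=0$ for all $i,j$, i.e. $P-O$ is orthogonal to the plane of the triangle; in particular $\langle A_{i}-O,\,P-O\rangle=0$. Then $H-B_{i}=(A_{i}-O)+(P-O)$ is an orthogonal sum, so by Pythagoras
\[
\|H-B_{i}\|^{2}=\|A_{i}-O\|^{2}+\|P-O\|^{2}=\|A_{i}-P\|^{2}=r^{2},
\]
and hence $H\in\mathcal{S}_{i}$ for every $i$. (This also yields $r^{2}=OA_{i}^{2}+OP^{2}$, and shows $H=H_{P}$ exactly when $P=O$.) This orthogonality-plus-Pythagoras observation is the one place where I expect a bit of work; everything else is bookkeeping.

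For Assertions 2 and 4 I would place the four points on a common line. From $G=S/3$, $Q_{P}=(S-P)/2$ and $H_{P}=S-2P$ one gets $H_{P}-G=2(G-P)=-2\,\overrightarrow{GP}$ and $Q_{P}-P=\tfrac{3}{2}(G-P)$, so all of $P,G,Q_{P},H_{P}$ lie on the line $t\mapsto P+t(G-P)$, at parameters $0,1,\tfrac{3}{2},3$ respectively. Reading off the first identity gives the Euler property ($G$ strictly between $P$ and $H_{P}$, with $GH_{P}=2\,PG$), and reading off the distances along the line gives $\dfrac{PG}{GQ_{P}}=\dfrac{PH_{P}}{H_{P}Q_{P}}=2$ (equivalently the cross ratio $(P,Q_{P};G,H_{P})=-1$), which is Assertion 4.

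Assertion 3 follows from the fact that each of the stated families of points is the image of a radius-$r$ sphere under a homothety of ratio $\tfrac{1}{2}$. The map $X\mapsto\tfrac{1}{2}(H_{P}+X)=\mathscr{H}_{H_{P},1/2}(X)$ sends $\mathcal{S}$ (center $P$, radius $r$) onto the sphere of center $\tfrac{1}{2}(H_{P}+P)=Q_{P}$ and radius $\tfrac{r}{2}$, which is $\mathcal{S}_{M}$; likewise $Y\mapsto\tfrac{1}{2}(P+Y)$ sends $\mathcal{S}_{H}$ (center $H_{P}$, radius $r$) onto $\mathcal{S}_{M}$. For the midpoints of the sides: $\|M_{i}-Q_{P}\|=\tfrac{1}{2}\|A_{j}+A_{k}-S+P\|=\tfrac{1}{2}\|P-A_{i}\|=\tfrac{r}{2}$, so $M_{i}\in\mathcal{S}_{M}$; and since $\mathscr{S}_{Q_{P}}$ maps the medial triangle of $\triangle A_{0}A_{1}A_{2}$ onto that of $\triangle B_{0}B_{1}B_{2}$ we get $N_{i}=\mathscr{S}_{Q_{P}}(M_{i})$, whence $\|N_{i}-Q_{P}\|=\|M_{i}-Q_{P}\|=\tfrac{r}{2}$. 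Collecting these facts establishes the theorem.
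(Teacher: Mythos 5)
Your proposal is correct, and for assertions 1--4 it follows essentially the same route as the paper: the same vector identities $B_{i}=A_{j}+A_{k}-P$, $\|H_{P}-B_{i}\|=\|A_{i}-P\|=r$, $\|M_{i}-Q_{P}\|=\tfrac{1}{2}\|A_{i}-P\|$, and the same two homotheties $\mathscr{H}_{H_{P},1/2}$ and $\mathscr{H}_{P,1/2}$ for the Feuerbach sphere; your parametrization of the line through $P,G,Q_{P},H_{P}$ is just a repackaging of the paper's separate distance computations for assertions 2 and 4.

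The one place where you genuinely diverge is the concurrence of the spheres at the classical orthocenter $H$. The paper disposes of this in a single sentence: the reflected circumcircles $\mathcal{C}_{i}=\mathscr{S}_{M_{i}}(\mathcal{C})$ satisfy $\mathcal{C}_{i}\subset\mathcal{S}_{i}$ (because $\mathcal{C}\subset\mathcal{S}$ and point reflections preserve inclusion), and $H$ lies on each $\mathcal{C}_{i}$ by the classical planar $\mathcal{C}$-orthocenter property recalled in the introduction. You instead prove it from scratch: from the two equidistance conditions you extract $\left\langle P-O,\,A_{j}-A_{i}\right\rangle =0$, hence $P-O$ is orthogonal to the plane of the triangle, and then $H-B_{i}=(A_{i}-O)+(P-O)$ gives $\|H-B_{i}\|^{2}=\|A_{i}-O\|^{2}+\|P-O\|^{2}=\|A_{i}-P\|^{2}=r^{2}$ by Pythagoras. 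Both arguments are sound. Yours is self-contained and makes explicit the geometric fact (orthogonality of $\overrightarrow{OP}$ to the triangle's plane, and the relation $r^{2}=OA_{i}^{2}+OP^{2}$) that the paper's inclusion $\mathcal{C}_{i}\subset\mathcal{S}_{i}$ silently encodes; the paper's version is shorter and keeps the $n$-dimensional statement visibly anchored to the known planar result. One caveat worth noting: your Pythagoras step uses the Euclidean inner product, whereas the paper is careful to point out at the end that its arguments for the $H_{P}$ statements avoid orthogonality and hence survive in general Minkowski spaces; since the claim about $H$ is intrinsically Euclidean anyway, this costs you nothing here, but it is the one step of your write-up that would not transfer to an arbitrary norm.
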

\begin{proof}
Since the circumferences $\mathcal{C}_{0}$, $\mathcal{C}_{1}$ and
$\mathcal{C}_{2}$ are included in $\mathcal{S}_{0}$, $\mathcal{S}_{1}$
and $\mathcal{S}_{2}$ respectively, then the point $H$ is in the
spheres $\mathcal{S}_{0}$, $\mathcal{S}_{1}$ and $\mathcal{S}_{2}$,.
In order to see $H_{P}=A_{0}+A_{1}+A_{2}-2P$ is in $H_{P}=A_{0}+A_{1}+A_{2}-2P$,
it is enough to take a look at $H_{P}B_{i}=r,$ for $i=0,1,2$, where
$B_{I}$ is the center of $H_{P}B_{i}=r$. Note that $B_{i}=A_{J}+A_{k}-P$,
for $\left\{ i,j,k\right\} =\left\{ 0,1,2\right\} $. From which.

\[
H_{P}B_{i}=\left\Vert \left(A_{J}+A_{k}-P\right)-\left(A_{0}+A_{1}+A_{2}-2P\right)\right\Vert =\left\Vert P-A_{i}\right\Vert =r
\]

\begin{enumerate}
\item Note that $A_{i}+B_{i}=A_{i}+A_{J}+A_{k}-P$, where $\left\{ i,j,k\right\} =\left\{ 0,1,2\right\} .$
Therefore, the midpoint of $A_{i}B_{i}$ is $Q_{P}=\frac{1}{2}\left(A_{0}+A_{1}+A_{2}-P\right)$,
for $i=0,1,2$.
\item Since $2\left(G-P\right)=\frac{2}{3}\left(A_{0}+A_{1}+A_{2}-3P\right)=\frac{2}{3}\left(H_{P}-P\right)=H_{P}-G$,
it follow that $P$ , $G$ and $H_{P}$ are  collinear and $2PG=GH_{P}$.

\begin{figure}[H]
\includegraphics[scale=0.8]{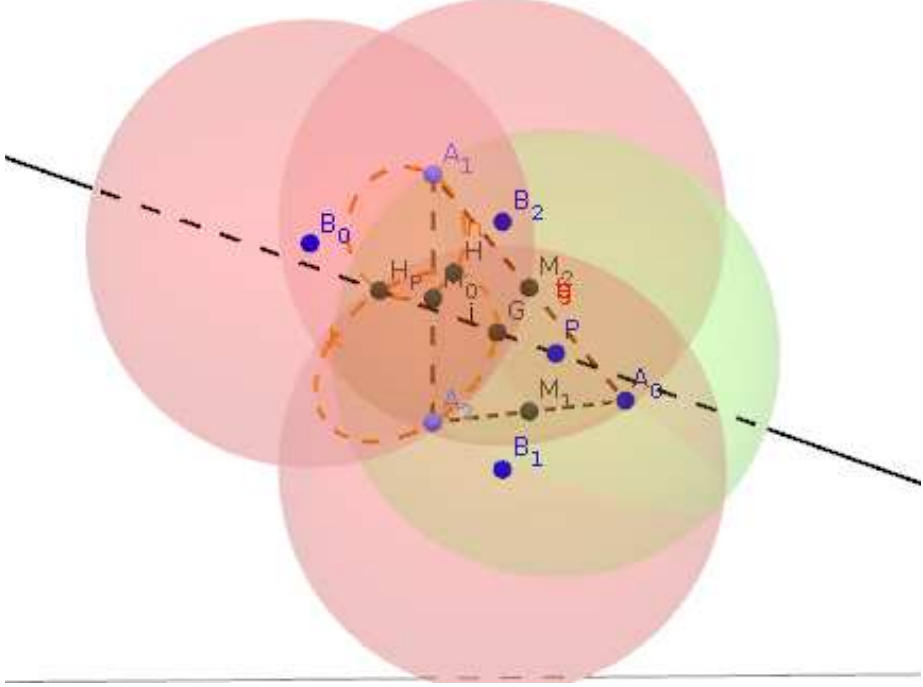}

\centering{}\caption{Orthocenter for a triangle in 3-dimesional space}
\end{figure}

\item By 1. we know that $\mathscr{S}_{Q_{P}}\left(\triangle A_{0}A_{1}A_{2}\right)=\triangle B_{0}B_{1}B_{2}$,
from where $\mathscr{S}_{Q_{P}}\left(M_{i}=N_{i}\right)$, for $i=0,1,2$,
For the first part only remains to show that $M_{i}Q_{P}=\nicefrac{r}{2}$,
for $i=0,1,2$. Indeed,
\begin{eqnarray*}
M_{i}Q_{P} & = & \left\Vert \frac{1}{2}\left(A_{0}+A_{1}+A_{2}-P\right)-\frac{1}{2}\left(A_{J}+A_{k}\right)\right\Vert \\
 & = & \frac{1}{2}\left\Vert A_{i}-P\right\Vert =\nicefrac{r}{2},
\end{eqnarray*}
for $i=0,1,2$.

\begin{figure}
\includegraphics[scale=0.8]{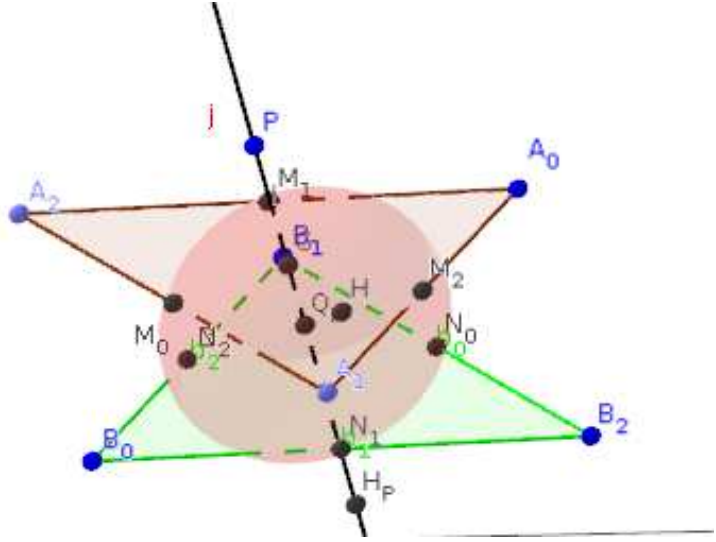}

\caption{Triangle, antitriangle and Feuerbach sphere}
\end{figure}

For the second part note that $\mathscr{H}_{H_{P},\nicefrac{1}{2}}\left(\mathcal{S}\right)=\mathcal{S}_{M}$
y $\mathscr{H}_{P,\nicefrac{1}{2}}\left(\mathcal{S}_{H}\right)=\mathcal{S}_{M}$
which implies the assertion.

\item Since 2 holds, $PG=\frac{1}{3}PH_{P}$. On the other hand
\begin{eqnarray*}
GQ_{P} & = & \left\Vert \left(\frac{1}{2}\left(A_{0}+A_{1}+A_{2}-P\right)-\frac{1}{3}\left(A_{0}+A_{1}+A_{2}\right)\right)\right\Vert \\
 & = & \frac{1}{6}\left\Vert \left(A_{0}+A_{1}+A_{2}-3P\right)\right\Vert =\frac{1}{6}PH_{P}
\end{eqnarray*}
and
\begin{eqnarray*}
Q_{P}H_{P} & = & \left\Vert \left(\left(A_{0}+A_{1}+A_{2}-2P\right)-\frac{1}{2}\left(A_{0}+A_{1}+A_{2}-P\right)\right)\right\Vert \\
 & = & \frac{1}{2}\left\Vert \left(A_{0}+A_{1}+A_{2}-3P\right)\right\Vert =\frac{1}{2}PH_{P},
\end{eqnarray*}
Finally, the assertion of the statement follows from the above relations.
\end{enumerate}
\end{proof}
We call the point $H_{P}$ the orthocenter of the triangle $\triangle A_{0}A_{1}A_{2}$
associated to $P$ and the set of all these orthocenters is denote
by $\mathcal{H}\left(\triangle A_{0}A_{1}A_{2}\right)$. The above
theorem says that the Euler property is satisfied, i.e., $\mathscr{H}_{G,\nicefrac{1}{2}}\left(\mathcal{C}\left(\triangle A_{0}A_{1}A_{2}\right)\right)=\mathcal{H}\left(\triangle A_{0}A_{1}A_{2}\right)$.
Furthermore, the orthocenter of the triangle $\triangle H_{P}A_{i}A_{j}$
associated to $B_{k}$ is the point $A_{k},$ where $\left\{ i,j,k\right\} =\left\{ 0,1,2\right\} $.
Thus, the notion of orthocentric system can be generalized to an n-dimensional
space, and we say that a set of four points $\left\{ A_{0},A_{1},A_{2},A_{3}\right\} $
is an orthocentric system, if there is a point $P\in\mathcal{C}\left(\triangle A_{0}A_{1}A_{2}\right)$
such that $A_{3}=A_{0}+A_{1}+A_{2}-2P$. We will see that the properties
about orthocentric systems in the plane previously listed are also
valid in this context. In fact, the following lemma is used for this
purpose..
\begin{lem}
The homothetic image of a $\mathcal{C}$-orthocentric system is a
$\mathcal{C}$-orthocentric system. 
\end{lem}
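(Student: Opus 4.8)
The plan is to transport the defining structure through the affine map directly. Write $\mathscr{H}=\mathscr{H}_{P_{0},\lambda}$ for the given homothety, where $\lambda\neq 0$ so that $\mathscr{H}$ is a bijection of $R^{n}$ carrying non-collinear triples to non-collinear triples (hence triangles to triangles). Set $A_{i}'=\mathscr{H}\left(A_{i}\right)$ for $i=0,1,2,3$. By hypothesis there is a point $P\in\mathcal{C}\left(\triangle A_{0}A_{1}A_{2}\right)$ with $A_{3}=A_{0}+A_{1}+A_{2}-2P$; let $r=\left\Vert A_{i}-P\right\Vert$ be the common distance. The natural candidate for the apex of the image system is $P'=\mathscr{H}\left(P\right)$.

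First I would check that $P'\in\mathcal{C}\left(\triangle A_{0}'A_{1}'A_{2}'\right)$. From the explicit formula $\mathscr{H}\left(X\right)=\left(1-\lambda\right)P_{0}+\lambda X$ one reads off $A_{i}'-P'=\lambda\left(A_{i}-P\right)$, so $\left\Vert A_{i}'-P'\right\Vert =\left\vert\lambda\right\vert r$ for every $i=0,1,2$. Since this quantity does not depend on $i$, the point $P'$ is equidistant from $A_{0}'$, $A_{1}'$ and $A_{2}'$, which by definition means $P'\in\mathcal{C}\left(\triangle A_{0}'A_{1}'A_{2}'\right)$.

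Next I would verify the defining relation $A_{3}'=A_{0}'+A_{1}'+A_{2}'-2P'$. The key observation is that $A_{0}+A_{1}+A_{2}-2P$ is an affine combination of $A_{0},A_{1},A_{2},P$ (the coefficients $1,1,1,-2$ sum to $1$), and an affine map commutes with affine combinations; hence $A_{0}'+A_{1}'+A_{2}'-2P'=\mathscr{H}\left(A_{0}+A_{1}+A_{2}-2P\right)=\mathscr{H}\left(A_{3}\right)=A_{3}'$. (Equivalently, this is a one-line substitution: $A_{0}'+A_{1}'+A_{2}'-2P'=3\left(1-\lambda\right)P_{0}+\lambda\left(A_{0}+A_{1}+A_{2}\right)-2\left(1-\lambda\right)P_{0}-2\lambda P=\left(1-\lambda\right)P_{0}+\lambda A_{3}=A_{3}'$.) Combining the two paragraphs, $\left\{A_{0}',A_{1}',A_{2}',A_{3}'\right\}$ satisfies the definition of a $\mathcal{C}$-orthocentric system, with witness point $P'$.

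There is no genuine obstacle here; the proof is essentially bookkeeping. The only two points that deserve a word of care are excluding the degenerate case $\lambda=0$ (so that the image is an honest triangle and $\mathcal{C}\left(\triangle A_{0}'A_{1}'A_{2}'\right)$ is defined), and noticing that the relevant combination has coefficient sum $1$ so that it is preserved by $\mathscr{H}$; the uniform scaling of all distances by the factor $\left\vert\lambda\right\vert$ then disposes of the equidistance requirement at once.
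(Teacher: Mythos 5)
Your proof is correct and follows essentially the same route as the paper's: map the witness point $P$ through the homothety and verify the defining relation $A_3'=A_0'+A_1'+A_2'-2P'$ by the same direct substitution, the equidistance of the image point being immediate from the uniform scaling of distances (a step the paper dismisses with ``clearly''). Your added remarks on excluding $\lambda=0$ and on affine maps preserving affine combinations are sound but do not change the argument.
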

\begin{proof} Let $\{A_{0},A_{1},A_{2},A_{3}\}$ be a $\mathcal{C}$-orthocentric
system, then there exists $P_{4}\in\mathcal{C}(\triangle A_{0}A_{1}A_{2})$
such that $A_{3}=A_{0}+A_{1}+A_{2}-2P$. 

Let $B_{i}=\mathscr{H}_{C,\lambda}(A_{i})$, for $i=0,1,2,3$, and
$R=\mathscr{H}_{C,\lambda}(P)$. Clearly $R\in\mathcal{C}(\triangle B_{0}B_{1}B_{2})$
and 
\[
\begin{array}{lll}
B_{0}+B_{1}+B_{2}-2R & = & ((1-\lambda)C+\lambda A_{0})+((1-\lambda)C+\lambda A_{1})\\
 &  & +((1-\lambda)C+\lambda A_{2})-2((1-\lambda)C+\lambda P)\\
 & = & (1-\lambda)C+\lambda\left(A_{0}+A_{1}+A_{2}-2P\right)\\
 & = & (1-\lambda)C+\lambda A_{3}=B_{3},
\end{array}
\]
which completes the proof. \end{proof} 
\begin{thm}
Let $\triangle A_{0}A_{1}A_{2}$ be a triangle in $R^{n}$ , $G$
its centroid, $P\in\mathcal{\mathcal{C}}\left(\triangle A_{0}A_{1}A_{2}\right)$
and $H_{P}$ the orthocenter associated with $P$. Then the sets of
points $\left\{ A_{0},A_{1},A_{2},H_{P}\right\} $, $\left\{ B_{0},B_{1},B_{2},P\right\} $,
$\left\{ M_{0},M_{1},M_{2},O\right\} $\textup{, $\left\{ N_{0},N_{1},N_{2},H_{P}\right\} $}
and $\left\{ G_{0},G_{1},G_{2},G\right\} $ are orthocentric systems,
where $G_{0}$, $G_{1}$ and $G_{2}$ are the centroids of the triangles
$\triangle H_{P}A_{1}A_{2}$, $\triangle A_{0}H_{P}A_{2}$ y $\triangle A_{0}A_{1}H_{P}$
respectively.\end{thm}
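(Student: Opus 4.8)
The plan is to deduce all five assertions from the two ingredients already at hand: the definition of a $\mathcal{C}$-orthocentric system (a set $\{X_{0},X_{1},X_{2},X_{3}\}$ qualifies as soon as $X_{3}=X_{0}+X_{1}+X_{2}-2R$ for some $R\in\mathcal{C}(\triangle X_{0}X_{1}X_{2})$) and the preceding lemma, that a homothety carries $\mathcal{C}$-orthocentric systems to $\mathcal{C}$-orthocentric systems. Two of the five systems I would verify directly from the definition, and then exhibit the remaining three as homothetic images of these.

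First, $\{A_{0},A_{1},A_{2},H_{P}\}$ is orthocentric straight from the definition, since $H_{P}=A_{0}+A_{1}+A_{2}-2P$ and $P\in\mathcal{C}(\triangle A_{0}A_{1}A_{2})$ by hypothesis. For $\{B_{0},B_{1},B_{2},P\}$ I would invoke the identity $B_{i}=A_{j}+A_{k}-P$ established in Theorem 1: it gives $B_{0}+B_{1}+B_{2}=2(A_{0}+A_{1}+A_{2})-3P$, hence $B_{0}+B_{1}+B_{2}-2H_{P}=P$, while the same theorem provides $\norm{H_{P}-B_{i}}=r$ for $i=0,1,2$, i.e. $H_{P}\in\mathcal{C}(\triangle B_{0}B_{1}B_{2})$. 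Thus $P$ is the orthocenter of $\triangle B_{0}B_{1}B_{2}$ associated with $H_{P}$, so $\{B_{0},B_{1},B_{2},P\}$ is orthocentric. (That $B_{0},B_{1},B_{2}$ are non-collinear is clear, since by the first assertion of Theorem 1 they are the image of $A_{0},A_{1},A_{2}$ under the point reflection $\mathscr{S}_{Q_{P}}$, an affine isomorphism; the analogous remark applies to every auxiliary triangle below.)

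For the remaining three systems I would pin down one homothety in each case and apply the lemma. The medial triangle $\triangle M_{0}M_{1}M_{2}$ is $\mathscr{H}_{G,-1/2}(\triangle A_{0}A_{1}A_{2})$, and for the classical orthocenter $H=A_{0}+A_{1}+A_{2}-2O$ (with $O\in\mathcal{C}(\triangle A_{0}A_{1}A_{2})$ the planar circumcenter, so $\{A_{0},A_{1},A_{2},H\}$ is orthocentric) one computes $\mathscr{H}_{G,-1/2}(H)=O$; therefore $\{M_{0},M_{1},M_{2},O\}=\mathscr{H}_{G,-1/2}(\{A_{0},A_{1},A_{2},H\})$ is orthocentric. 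Next, $\triangle N_{0}N_{1}N_{2}$ is the medial triangle of $\triangle B_{0}B_{1}B_{2}$, i.e. its image under $\mathscr{H}_{G_{B},-1/2}$, where $G_{B}=\tfrac{1}{3}(B_{0}+B_{1}+B_{2})=2G-P$ is the centroid of $\triangle B_{0}B_{1}B_{2}$; since $\mathscr{H}_{G_{B},-1/2}(P)=3G-2P=H_{P}$, the set $\{N_{0},N_{1},N_{2},H_{P}\}$ is the homothetic image of the orthocentric system $\{B_{0},B_{1},B_{2},P\}$. Finally, with $S=A_{0}+A_{1}+A_{2}+H_{P}$ one has $G_{i}=\tfrac{1}{3}(S-A_{i})=\mathscr{H}_{S/4,-1/3}(A_{i})$ for $i=0,1,2$ and $\mathscr{H}_{S/4,-1/3}(H_{P})=\tfrac{1}{3}(S-H_{P})=G$, so $\{G_{0},G_{1},G_{2},G\}=\mathscr{H}_{S/4,-1/3}(\{A_{0},A_{1},A_{2},H_{P}\})$ is orthocentric by the lemma.

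The argument is mostly bookkeeping once the right homothety is spotted. The one place that needs care is selecting the correct center and ratio in each case — in particular the ratio $-\tfrac{1}{3}$ together with center $\tfrac{1}{4}S$ in the last system is precisely what sends each vertex $A_{i}$ to $G_{i}$ and at the same time $H_{P}$ to $G$ — and checking, as noted above, that every auxiliary triangle is non-degenerate so that $\mathcal{C}(\triangle\,\cdot\,)$ is defined. Beyond a handful of short vector identities I do not expect any real obstacle.
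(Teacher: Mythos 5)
Your proof is correct and follows essentially the same strategy as the paper's: verify one or two of the systems directly from the definition and obtain the rest as homothetic images via the preceding lemma (your center $\tfrac{1}{4}S$ for the last system is exactly the paper's $Q_{P}$, and your direct check of $\{B_{0},B_{1},B_{2},P\}$ replaces the paper's use of the point reflection $\mathscr{S}_{Q_{P}}$, which is itself a homothety of ratio $-1$). The only substantive divergence is in the third system: the paper's own proof computes $\mathscr{H}_{G,-\nicefrac{1}{2}}(H_{P})=P$ and thus establishes $\{M_{0},M_{1},M_{2},P\}$ (suggesting the $O$ in the statement is a slip for $P$), whereas your argument with the classical orthocenter $H$ and the planar circumcenter $O$ proves the set exactly as written; both readings are handled correctly.
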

\begin{proof}
We know that $M_{i}=\mathscr{H}_{G,-\nicefrac{1}{2}}(A_{i})$, for
$i=0,1,2,3$, and $\ \mathscr{H}_{G,-\nicefrac{1}{2}}(H_{P})=\frac{3}{2}G-\frac{1}{2}H_{P}=P$,
from which$\left\{ M_{0},M_{1},M_{2},P\right\} =\mathscr{H}_{G,-\nicefrac{1}{2}}\left(\left\{ A_{0},A_{1},A_{2},H_{P}\right\} \right)$

If $Q_{P}=\frac{1}{2}\left(A_{0}+A_{1}+A_{2}-P\right)$, then $\mathscr{S}_{\ensuremath{Q_{P}}}\left(P\right)=H_{P}$.
Thus, $\mathscr{S}_{\ensuremath{Q_{P}}}\left(\left\{ A_{0},A_{1},A_{2},H_{P}\right\} \right)=\left\{ H_{0},H_{1},H_{2},P\right\} $
and $\mathscr{S}_{\ensuremath{Q_{P}}}\left(\left\{ M_{0},M_{1},M_{2},P\right\} \right)=\left\{ N_{0},N_{1},N_{2},H_{P}\right\} $. 

Finally, $G_{i}=\frac{A_{i}+2A_{j}+2A_{k}-2P}{3}$, from which 
\[
\mathscr{H}_{Q_{P},-\nicefrac{1}{3}}\left(A_{i}\right)=\frac{4}{3}Q_{P}-\frac{1}{3}A_{i}=\frac{2}{3}\left(A_{0}+A_{1}+A_{2}-P\right)-\frac{1}{3}A_{i}=G_{i}
\]
and
\begin{eqnarray*}
\mathscr{H}_{Q_{P},-\nicefrac{1}{3}}\left(H_{P}\right) & = & \frac{4}{3}Q_{P}-\frac{1}{3}H_{P}=\frac{2}{3}\left(A_{0}+A_{1}+A_{2}-P\right)-\frac{1}{3}\left(A_{0}+A_{1}+A_{2}-2P\right)=G.
\end{eqnarray*}

Thus, $\mathscr{H}_{Q_{P},-\nicefrac{1}{3}}\left(\left\{ A_{0},A_{1},A_{2},H_{P}\right\} \right)=\left\{ G_{0},G_{1},G_{2},G\right\} $.\end{proof}
\begin{thm}
If \textup{$\left\{ A_{0},A_{1},A_{2},A_{3}\right\} $} is a orthocentric
system, then \textup{$\overrightarrow{A_{i}A_{j}}\bot\overrightarrow{A_{k}A_{l}}$
for $\left\{ i,j,k,l\right\} =\left\{ 0,1,2,3\right\} $}.\end{thm}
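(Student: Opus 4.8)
The plan is to unwind the definition of an orthocentric system and reduce all of the required perpendicularities to one short computation using only that $P$ is equidistant from $A_{0},A_{1},A_{2}$.

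First I would invoke the definition given after Theorem~2: since $\left\{ A_{0},A_{1},A_{2},A_{3}\right\} $ is an orthocentric system, there is a point $P\in\mathcal{C}\left(\triangle A_{0}A_{1}A_{2}\right)$ with $A_{3}=A_{0}+A_{1}+A_{2}-2P$, and we set $r=\left\Vert A_{0}-P\right\Vert =\left\Vert A_{1}-P\right\Vert =\left\Vert A_{2}-P\right\Vert $. Next I would observe that the three ways of splitting $\left\{ 0,1,2,3\right\} $ into two pairs are precisely the pairings $\left\{ \left\{ m,3\right\} ,\left\{ j,k\right\} \right\} $ with $\left\{ m,j,k\right\} =\left\{ 0,1,2\right\} $, so it suffices to treat, for each such $m$, the pair of vectors $\overrightarrow{A_{m}A_{3}}$ and $\overrightarrow{A_{j}A_{k}}$.

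For a fixed $m$ one has $\overrightarrow{A_{m}A_{3}}=A_{3}-A_{m}=A_{j}+A_{k}-2P=\left(A_{j}-P\right)+\left(A_{k}-P\right)$ and $\overrightarrow{A_{j}A_{k}}=\left(A_{k}-P\right)-\left(A_{j}-P\right)$. Applying the polarization identity $\left(u+v\right)\cdot\left(u-v\right)=\left\Vert u\right\Vert ^{2}-\left\Vert v\right\Vert ^{2}$ with $u=A_{k}-P$ and $v=A_{j}-P$ gives
\[
\overrightarrow{A_{m}A_{3}}\cdot\overrightarrow{A_{j}A_{k}}=\left\Vert A_{k}-P\right\Vert ^{2}-\left\Vert A_{j}-P\right\Vert ^{2}=r^{2}-r^{2}=0 ,
\]
hence $\overrightarrow{A_{m}A_{3}}\bot\overrightarrow{A_{j}A_{k}}$. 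Letting $m$ run through $0,1,2$ yields all three orthogonalities, which is the assertion.

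There is no real obstacle here: the computation is routine, and the only thing worth noticing is that the apparent asymmetry of the definition (which singles out $A_{3}$) does not matter, because each of the three pairings can be written with $A_{3}$ matched against one of the original vertices, so the single identity above covers every case. If one preferred a manifestly symmetric treatment, one could instead first record that the property of being an orthocentric system is independent of which of the four points is distinguished and then apply the same identity, but that is not needed.
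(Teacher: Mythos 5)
Your proof is correct, and the underlying algebra is the same as the paper's: both arguments come down to writing $\overrightarrow{A_{m}A_{3}}=(A_{j}-P)+(A_{k}-P)$ and $\overrightarrow{A_{j}A_{k}}=(A_{k}-P)-(A_{j}-P)$ and using $\left\Vert A_{j}-P\right\Vert =\left\Vert A_{k}-P\right\Vert =r$. The difference is in how that equality of norms is converted into perpendicularity. You apply the polarization identity $(u+v)\cdot(u-v)=\left\Vert u\right\Vert ^{2}-\left\Vert v\right\Vert ^{2}$, which uses the inner product of $R^{n}$. The paper instead verifies isosceles orthogonality, computing $\left\Vert \overrightarrow{A_{0}A_{1}}-\overrightarrow{A_{2}A_{3}}\right\Vert =\left\Vert 2(P-A_{0})\right\Vert =2r$ and $\left\Vert \overrightarrow{A_{0}A_{1}}+\overrightarrow{A_{2}A_{3}}\right\Vert =\left\Vert 2(A_{1}-P)\right\Vert =2r$, and then appeals to the fact that in Euclidean space isosceles orthogonality coincides with ordinary orthogonality. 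The two are equivalent here, but the paper's norm-only formulation is what supports its closing remark that the results persist for an arbitrary norm on $R^{n}$ (Minkowski spaces), where there is no inner product and isosceles orthogonality is the natural surrogate; your polarization argument does not transfer to that setting. On the other hand, your treatment of the case analysis is cleaner: you observe explicitly that the three partitions of $\left\{ 0,1,2,3\right\} $ into two pairs are exactly the pairings $\left\{ \left\{ m,3\right\} ,\left\{ j,k\right\} \right\} $ with $\left\{ m,j,k\right\} =\left\{ 0,1,2\right\} $ and handle all of them uniformly, where the paper does one case and says the rest are analogous.
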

\begin{proof}
Since orthogonality in $R^{n}$ is equivalent to isosceles orthogonality,
we just need to see that$\left\Vert \overrightarrow{A_{i}A_{j}}-\overrightarrow{A_{k}A_{l}}\right\Vert =\left\Vert \overrightarrow{A_{i}A_{j}}+\overrightarrow{A_{k}A_{l}}\right\Vert $.

Indeed, consider the case; $i=0$, $j=1$, $k=2$, $l=3$, Let $P\in\mathcal{C}\left(\triangle A_{0}A_{1}A_{2}\right)$,
be such that $A_{3}=A_{0}+A_{1}+A_{2}-2P$ and $r$ be the radius
of the sphere with center $P$ passing through $A_{0}$, $A_{1}$
and $A_{2}$. Then
\[
\left\Vert \overrightarrow{A_{0}A_{1}}-\overrightarrow{A_{2}A_{3}}\right\Vert =\left\Vert \left(A_{1}-A_{0}\right)-\left(A_{3}-A_{2}\right)\right\Vert =\left\Vert 2\left(P-A_{0}\right)\right\Vert =2r
\]
and

\[
\left\Vert \overrightarrow{A_{0}A_{1}}+\overrightarrow{A_{2}A_{3}}\right\Vert =\left\Vert \left(A_{1}-A_{0}\right)+\left(A_{3}-A_{2}\right)\right\Vert =\left\Vert 2\left(A_{1}-P\right)\right\Vert =2r.
\]

The other cases are shown analogously.
\end{proof}
The above theorem tells us also that if $\left\{ A_{0},A_{1},A_{2},A_{3}\right\} $
is an orthocentric system and $A_{3}$ is not on the plane determined
$A_{0}$, $A_{1}$ and $A_{2}$. Then the tetrahedron $A_{0}A_{1}A_{2}A_{3}$
is an orthocentric tetrahedron, i.e, the altitudes of this tetrahedron
concur.

It is also important to note that, in our proof of the previous theorems
we do not use of the orthogonality properties in $R^{n}$ hence the
results presented here are still valid if we take any norm in $R^{n}$,
i.e., in Minkowsky spaces in general.

\end{document}